\let\pa=\partial
\let\f=\frac
\let\om=\omega
\let\Om=\Omega
\let\ve=\varepsilon
\let\pa=\partial
\let\va=\varphi
\def\cN{{\cal N}}
\def\cN{{\mathcal N}}
\def\na{\nabla}
\newcommand{\beq}{\begin{equation}}
\newcommand{\eeq}{\end{equation}}
\newcommand{\ben}{\begin{eqnarray}}
\newcommand{\een}{\end{eqnarray}}
\newcommand{\beno}{\begin{eqnarray*}}
\newcommand{\eeno}{\end{eqnarray*}}
\newtheorem{theorem}{Theorem}[section]
\newtheorem{lemma}[theorem]{Lemma}
\newtheorem{proposition}[theorem]{Proposition}
\newtheorem{Theorem}{Theorem}[section]
\newtheorem{Remark}[Theorem]{Remark}
\newcommand{\ud}{\mathrm{d}}
\newcommand{\nn}{\mathbf{n}}
\newcommand{\mm}{\mathbf{m}}
\newcommand{\II}{\mathbf{I}}
\newcommand{\BP}{\mathbb{P}}
\newcommand{\BQ}{\mathcal{Q}}
\newcommand{\BR}{{\mathbb{R}^2}}
\begin{document}
\title{Stability of Half-Degree Point Defect profiles for 2-D Nematic Liquid Crystal}

\author{Zhiyuan Geng}
\address{Courant Institute of Mathematical Sciences, New York University}
\email{zg574@nyu.edu}

\author{Wei Wang}
\address{Department of Mathematics, Zhejiang University, 310027, Hangzhou, P. R. China}
\email{wangw07@zju.edu.cn}

\author{Pingwen Zhang}
\address{School of Mathematical Sciences, Peking University, 100871, P. R. China}
\email{pzhang@pku.edu.cn}

\author{Zhifei Zhang}
\address{School of Mathematical Sciences, Peking University, 100871, P. R. China}
\email{zfzhang@math.pku.edu.cn}

\begin{abstract}
In this paper, we prove the stability of half-degree point defect profiles
in $\mathbb{R}^2$  for the nematic liquid crystal within Landau-de Gennes model.
\end{abstract}

%\date{\today}
\maketitle

\section{Introduction}

Defects in liquid crystal are known as the places where the degree of symmetry of the nematic order increases so that
the molecular direction cannot be well defined. The most striking feature
of liquid crystal is a variety of visual defect patterns. Predicting the profiles of defect as well as stability
is thus of great practical importance and theoretical interest. We mention some works \cite{BP, KL, M, TK} on the defects based
on the topological properties of the order parameter manifolds.

There exist three commonly used continuum theories describing the nematic liquid crystal: Oseen-Frank model, Ericksen model
and Landau-de Gennes model. In the Oseen-Frank model, the state of nematic liquid crystals is described by a unit-vector filed which represents the mean local orientation of molecules, and defects are interpreted as all singularities of this vector field \cite{HKL,He, E, LL}. However, the core structure of defects in nematic liquid crystals, such as the disclination lines observed in experiments, cannot be represented by the usual director field and requires description by Landau-de Gennes model \cite{dGP}. In this model, the state of nematic liquid crystals is described by a $3\times3$ order tensor $Q$ belonging to
\begin{align*}
\BQ=\Big\{Q: Q\in\mathbb{M}^{3\times 3},\,Q=Q^T,\, \mathrm{tr} Q=0\Big\}.
\end{align*}
For $Q\in\BQ$, one can find $s,b\in \mathbb{R}, \nn,\mm\in \mathbb{S}^2$ with $\nn\cdot\mm=0$ such that
\begin{equation}
\nonumber Q=s(\nn\otimes \nn-\frac{1}{3}\II)+b(\mm\otimes\mm-\frac{1}{3}\II),
\end{equation}
where $\mathbf{I}$ is $3\times3$ identity matrix. The local physical properties of nematic liquid crystals depend on the degree of symmetry of order tensor $Q$.
Specifically, there are three different states:
\begin{enumerate}
  \item $s=b=0$, which describes the isotropic distribution;
  \item $s\neq 0,b=0$, which corresponds to the uniaxial distribution;
  \item $s\neq 0, b\neq 0$, which describes the biaxial distribution.
\end{enumerate}

Configuration of nematic liquid crystals corresponds to local minimizers of Landau-de Gennes energy functional, whose
simplest form is given by
\begin{equation}\label{energy:LG}
\mathcal{F}_{LG}[Q]=\int_{\Om}\Big\{\f{L}{2}|\nabla Q(x)|^2+f_B(Q(x))\Big\}\ud x,
\end{equation}
where $L>0$ is a material-dependent elastic constant, and $f_B$ is the bulk energy density,  which can be taken as follows
\begin{equation}
\nonumber f_B(Q)=-\f{a^2}{2}\mathrm{tr}(Q^2)-\f{b^2}{3}\mathrm{tr}(Q^3)+\f{c^2}{4}\mathrm{tr}(Q^2)^2,
\end{equation}
where $a^2$,$b^2$,$c^2$ are material-dependent and non-zero constants, which may depend on temperature. A well-known fact is that $f_B(Q)$ attains its minimum on a manifold $\cN$  given by
\beno
\cN=\Big\{Q\in \BQ:Q=s^{+}(\nn\otimes \nn-\f{1}{3}\II),\, \nn\in \mathbb{R}^3,|\nn|=1\Big\},
\eeno
where $s^{+}=\f{b^2+\sqrt{b^4+4a^2c^2}}{4c^2}$. It is easy to see that $\cN$ is a smooth submanifold of $\BQ$,
homemorphic to the real projective plane $\mathbb{R}\BP^2$, and contained in the sphere $\Big\{Q\in \BQ:|Q|=\sqrt{\f{2}{3}}s^{+}\Big\}$.
Critical points of Landau-de Gennes functional satisfy the Euler-Lagrange equation
\begin{equation}\label{eq:EL}
 L\Delta Q=-a^2Q-b^2(Q^2-\frac{1}{3}|Q|^2I)+c^2Q|Q|^2.
\end{equation}

The Landau-de Gennes energy (\ref{energy:LG}) and Euler-Lagrange equation (\ref{eq:EL}) are widely
used to study the  behavior of defects, see \cite{BaP, C, GM, MZ}
and references therein. However, there still exist many challenging problems in understanding the
mechanism which generates defects and predicting their profiles as well as stability, see \cite{HQZ}
for many conjectures. The radial symmetric solution in a ball or in $\mathbb{R}^3$, named
hedgehog solution, is regarded as a potential candidate profile for the isolated point defect in 3-D region. The property and stability
of this solution are well studied and it is shown that the radial symmetric solution are not stable for large $a^2$
and stable for small $a^2$ \cite{INSZ1}. We also refer \cite{RV, Ma, La, INSZ0} and references therein for related works.

In this paper, we are concerned with  a class of point defects in $\mathbb{R}^2$,
which correspond to ``radial" solutions of  the Euler-Lagrange equation (\ref{eq:EL}).
Here ``radial" means that the eigenvectors of $Q$ don't change along the radial direction. Precisely speaking,
we study the solution with the form
\begin{align}\label{prsolu}
    Q(r,\varphi)
      &=u(r)F_1+v(r)F_2,
\end{align}
where $(r,\varphi)$ is the polar coordinate in $\mathbb{R}^2$, and
\begin{align}
  F_1=2\nn\nn-\II_2=\left(
        \begin{array}{ccc}
          \cos k\varphi & \sin k\varphi & 0 \\
          \sin k\varphi & -\cos k\varphi & 0 \\
          0 & 0 & 0 \\
        \end{array}
      \right),\quad
  F_2=3\mathbf{e}_3\otimes\mathbf{e}_3-\II=\left(
        \begin{array}{ccc}
          -1 & 0 & 0 \\
          0 & -1 & 0 \\
          0 & 0 & 2 \\
        \end{array}
      \right).
\end{align}
The boundary condition on these solutions is taken to be
\begin{align}\label{boundary}
\lim_{r\to+\infty}Q(r,\varphi)=s_+(\nn(\varphi)\otimes \nn(\varphi)-\frac{1}{3}\II),\quad \nn(\varphi)=(\cos{\frac{k}{2}\varphi},\sin{\frac{k}{2}\varphi},0),~~
\end{align}
which has degree $\frac k 2$ about origin as an $\mathbb{R}\BP^2$-valued map. Here $k\in\mathbb{Z}\setminus\{0\}$.
Note that if we assume the invariance of $Q$ along the defect line,
then disclination line in 3-D domain can be ideally treated as a point defect in 2-D domain.

In \cite{INSZ2}, Ignat, Nguyen, Slastikov and Zarnescu proved the existence of the radial solution for any non-zero integer $k$. Moreover, the solution is also a local minimizer of the reduced functional (\ref{energy:radi}).  An important question is whether the radical solution they constructed is a local minimizer of the energy $\mathcal{F}_{LG}$. This problem was also partially answered in \cite{INSZ2}, where the instability result is proved for $|k|>1$.
However, the question of whether the $k$-radially symmetric solutions (\ref{prsolu}) subject to (\ref{boundary}) for $k=\pm 1$ are stable remains open.

The goal of this paper is to give a positive answer to this question. Precise result will be stated in next section.
We remark that this problem is somewhat analogous to the stability of radial solutions of the Ginzburg-Landau equation(see \cite{Lieb, Miron, LiT, GS, PFK} for example).

\setcounter{equation}{0}
\section{The stability of radially symmetric solution with $k=\pm 1$}

We make the following rescaling
\beno
\widetilde{Q}=\f{c^2}{b^2}Q,\quad \widetilde{x}=\sqrt{\f{2}{L}}\f{b^2}{c},
\eeno
and let $t=\f{a^2c^2}{b^4}$. Then Landau-de Gennes energy functional (\ref{energy:LG}) is rescaled into the form(drop the tildes):
\begin{equation}
\nonumber \mathcal{F}_{LG}[Q]=\int_{\mathbb{R}^2}\Big\{\f{1}{2}|\nabla Q(x)|^2-\f{t}{2}\mathrm{tr}(Q^2)-\f{1}{3}\mathrm{tr}(Q^3)+\f{1}{4}\mathrm{tr}(Q^2)^2\Big\}\ud x.
\end{equation}
Therefore, without loss of generality, we may take $L=b=c=1$ and $a^2=t>0$.

In such case, substituting (\ref{prsolu}) into (\ref{eq:EL}), $(u,v)$ satisifes
the following ODE system(see \cite{DRSZ, HQZ}):
\begin{eqnarray} \label{ODE}
\left\{
\begin{array}{l}
  u''+\frac{u'}{r}-\frac{k^2}{r^2}u=u[-t+2v+(6v^2+2u^2)] , \\
  v''+\frac{v'}{r}=v[-t-v+(6v^2+2u^2)]+\frac{1}{3}u^2,
\end{array}\right.
\end{eqnarray}
together with the boundary conditions
\begin{equation}\label{bdy2}
u(0)=0,\quad v'(0)=0,\quad u(\infty)=\frac{s^{+}}{2},\quad v(+\infty)=-\frac{s^{+}}{6},
\end{equation}
where $s^+=\f{1+\sqrt{1+24t}}{4}$. The system (\ref{ODE}) can be also viewed as the Euler-Lagrange equation of the functional
\begin{align}
  \nonumber E(u,v)=&\int_0^\infty\Big\{\frac{1}{2}\big[2(\frac{\partial u}{\partial r})^2+6(\frac{\partial v}{\partial r})^2+\frac{2k^2u^2}{r^2}\big]-\frac{t}{2}(2u^2+6v^2)\\ \label{energy:radi}
   &\qquad\qquad\qquad-2v(v^2-u^2)+\frac{1}{4}(2u^2+6v^2)^2 \Big\}rdr,
\end{align}
which is a reduced form of (\ref{energy:LG}) under the assumption (\ref{prsolu}).\medskip

Let us recall some basic properties of the solution $(u,v)$  constructed in \cite{INSZ2}:
\begin{itemize}
 \item[(H1)] $u>0,v<0, u+3v<0$, $u^2+3v^2<s_+^2/3$ for $r\in(0,\infty)$;
 \item[(H2)] For $t<1/3$: $v> -\frac{s_+}{6}>-\frac16$;
 \item[(H3)] For $t>1/3$: $v< -\frac{s_+}{6}<-\frac16$;
 \item[(H4)] For $t=1/3$: $v= -\frac{s_+}{6}=-\frac16$;
 \item[(H5)] $u'>0,~v'(1+6v)\le0$.
\end{itemize}
\begin{Remark}
In fact, \text{(H5)} was not verified in \cite{INSZ2}.
In the appendix, we will present a proof by using (H1)-(H4)
and the fact that $(u,v)$ is a minimizer of (\ref{energy:radi}).
\end{Remark}

The main result of this paper is stated as follows.

\begin{theorem}\label{thm:main}
Let $(u,v)$ be a stable critical point of (\ref{energy:radi}) for $k=\pm1$ with the properties (H1)-(H4). Then the solution $Q=u(r)F_1+v(r)F_2$ is a local minimizer of Landau-de Gennes energy (\ref{energy:LG}).
That is, for any perturbation $V\in H^1(\mathbb{R}^2, \mathcal{Q})$, it holds
\begin{align}\label{ineq:main}
\mathcal{I}(V) &\triangleq\frac{d^2}{d\ve^2}\int_{\BR}\Big\{\frac{1}{2}|\nabla(Q+\ve V)|^2-\frac{1}{2}|\nabla Q|^2
-\frac{t}{2}(|Q+\ve V|^2-|Q|^2)\\
\nonumber&\qquad\qquad-\frac{1}{3}\big[\mathrm{tr}((Q+\ve V)^3)-\mathrm{tr}(Q^3)\big]+\frac{1}{4}(|Q+\ve V|^4-|Q|^4)\Big\}\ud x\Big|_{\ve=0} \\
\nonumber&=\int_{\BR}\Big\{|\nabla V|^2-t|V|^2-2\mathrm{tr}(QV^2)+|Q|^2|V|^2+2(\mathrm{tr}(QV))^2\Big\}\ud x\ge 0.
\end{align}
Moreover, the equality holds if and only if $V\in\text{span}\{V_0, V_1, V_2, V_3, V_4\}$, where
\begin{align*}
V_0&=uE_2,\\
V_1&=(u'(r)E_1+\sqrt{3}v'(r)E_0)\cos\varphi-\frac{ku}{r}E_2\sin\varphi,\\
V_2&=(u'(r)E_1+\sqrt{3}v'(r)E_0)\sin\varphi+\frac{ku}{r}E_2\cos\varphi,\\
V_3&=(u\cos k\varphi-3v)E_3+u\sin k\varphi E_4,\\
V_4&=u\sin k\varphi E_3 -(u\cos k\varphi+3v)E_4.
\end{align*}
Here $E_0, E_1,\cdots, E_4$ are defined in (\ref{def:Ei}).
\end{theorem}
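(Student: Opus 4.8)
The plan is to exploit the rotational covariance of the energy to block-diagonalize the quadratic form $\mathcal{I}$, reduce it to a countable family of one-dimensional variational problems on the half-line, and prove each of these is nonnegative by a ground-state (factorization) argument whose strictly positive ground states are supplied by the explicit zero modes $V_0,\dots,V_4$.

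First I would pass to the moving orthonormal frame $E_0(\varphi),\dots,E_4(\varphi)$ of $\mathcal{Q}$ obtained by conjugating a fixed frame by the rotation $R_{\mathbf{e}_3}(\tfrac{k}{2}\varphi)$ about the $\mathbf{e}_3$-axis, under which the background becomes $Q=\alpha u(r)E_1+\beta v(r)E_0$ with $E_0$ fixed along $F_2$, the in-plane pair $E_1,E_2$ rotating at angular speed $k$, and the out-of-plane pair $E_3,E_4$ rotating at speed $k/2$. The decisive gain is that in this frame every algebraic coefficient of the potential part of $\mathcal{I}$ --- namely $-t|V|^2-2\mathrm{tr}(QV^2)+|Q|^2|V|^2+2(\mathrm{tr}(QV))^2$ --- becomes independent of $\varphi$ once $V=\sum_i\xi_i(r,\varphi)E_i$ is inserted, while $\nabla V$ only picks up constant connection terms from $\partial_\varphi E_i$. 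Writing $\mathcal{I}$ out in the frame, the form respects the splitting of $\mathcal{Q}$ into the in-plane block $(\xi_0,\xi_1,\xi_2)$ and the out-of-plane block $(\xi_3,\xi_4)$, and these two blocks do not interact. I would then Fourier-expand each $\xi_i$ in $\varphi$; because the coefficients are $\varphi$-independent the modes decouple and $\mathcal{I}$ becomes a sum $\sum_n\mathcal{I}_n$ of one-dimensional quadratic forms carrying the weight $r\,dr$ and centrifugal terms. For $k=\pm1$ the frame $E_3,E_4$ is anti-periodic in $\varphi$, so single-valuedness of $V$ forces $\xi_3,\xi_4$ into half-integer Fourier modes --- this is where the half-degree geometry enters.

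Next I would dispose of the blocks sector by sector. In the in-plane block, the mode $n=0$ restricted to the tangential directions $(\xi_0,\xi_1)$ is exactly the second variation of the reduced functional $E(u,v)$ in (\ref{energy:radi}), so its nonnegativity is precisely the stability hypothesis on $(u,v)$; all other in-plane contributions --- the in-plane rotation direction $\xi_2$ (with zero mode $V_0=uE_2$), the translational modes $n=\pm1$ (carrying $V_1,V_2$), and the higher modes, which only add positive $n^2/r^2$ terms --- I would treat by factorization. The mechanism is standard: for each resulting scalar operator $L=-\partial_r^2-\tfrac1r\partial_r+\tfrac{m^2}{r^2}+W(r)$ I would produce a strictly positive solution $\phi$ of $L\phi=0$ built from a component of a known zero mode, and the substitution $\xi=\phi\eta$ yields $\int_0^\infty\phi^2(\partial_r\eta)^2\,r\,dr\ge0$ after an integration by parts. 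The positivity of these ground states is exactly what (H1)--(H5) provide: $u>0$ anchors the $\xi_2$ sector, while $u>0$, $u'>0$, $v'(1+6v)\le0$ together with the sign relations $v<0$, $u+3v<0$ and (H2)--(H4) anchor the remaining operators, and the ODE system (\ref{ODE}) identifies the potentials $W$.

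The main obstacle is the coupling that symmetry does not remove. The out-of-plane block is a genuinely coupled $2\times2$ system in $(\xi_3,\xi_4)$ at each half-integer mode, on which the stability hypothesis gives no information whatsoever, so its nonnegativity must rest entirely on (H1)--(H5) and (\ref{ODE}); handling it requires a \emph{vector-valued} ground-state substitution built from the matrix zero modes $V_3,V_4$, and checking that the $2\times2$ potential is dominated by this substitution is the crux of the whole argument. A parallel coupling between $E_0$ and $E_1$ (and, through the connection, $E_2$) must be absorbed within the in-plane block. I expect this vector factorization in the out-of-plane block, together with the bookkeeping forced by the half-integer modes, to be the hardest part. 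Finally, the equality case falls out of the factorization: $\mathcal{I}(V)=0$ forces every slack term $\int_0^\infty\phi^2(\partial_r\eta)^2\,r\,dr$ to vanish, so each $\eta$ is constant and $V$ is pinned to a multiple of the corresponding ground state; assembling the admissible modes shows $V\in\mathrm{span}\{V_0,\dots,V_4\}$, and one checks directly that these five modes lie in $H^1(\mathbb{R}^2,\mathcal{Q})$ and are linearly independent.
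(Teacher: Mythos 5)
Your plan is essentially the paper's proof: the same splitting of $V=\sum_i w_iE_i$ into the non-interacting blocks $(w_0,w_1,w_2)$ and $(w_3,w_4)$, Fourier decomposition in $\varphi$, identification of the radial $(w_0,w_1)$ zero mode with the second variation of (\ref{energy:radi}) (the stability hypothesis), and Hardy-type factorization identities whose ground states are exactly the components $u$, $v$, $u'$, $v'$, $u/r$ of the symmetry-generated zero modes $V_0,\dots,V_4$, with (H1)--(H5) supplying the signs that make the resulting sums of squares nonnegative. The one organizational difference is cosmetic: you rotate $E_3,E_4$ at speed $k/2$ so the potential becomes $\varphi$-independent and the out-of-plane components live in half-integer modes, whereas the paper keeps $E_3,E_4$ constant and instead pairs the integer Fourier modes $z_m$ and $z_{k-m}$ of $z=w_3+iw_4$ through the term $\mathrm{Re}(e^{-ik\varphi}z^2)$ --- the two bookkeepings are equivalent under $z\mapsto e^{ik\varphi/2}z$, and the out-of-plane factorization you flag as the crux is carried out in the paper rather simply via the substitutions $q_1=u\eta$, $q_0=v\zeta$ and the identity $\tilde{I}=\int_0^\infty\big\{(u\eta')^2+(v\zeta')^2-\tfrac13vu^2(3\eta+\zeta)^2\big\}\,r\,dr\ge0$.
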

\begin{Remark}
The above null space is generated by the invariance
of the energy $F_{LG}$ under the rotation and translation, i.e.,
\begin{align}
\mathcal{F}_{LG}(RQR^T)=\mathcal{F}_{LG}(Q),\quad \mathcal{F}_{LG}\big(Q(x+x_0)\big)=\mathcal{F}_{LG}(Q(x)),
\end{align}
for any constant $R\in SO(3)$ and constant $x_0\in\mathbb{R}^2.$
\end{Remark}

\begin{Remark}
This result implies that the solution (\ref{prsolu}) for $k=\pm1$ can be regarded as the profile of point defects in $\mathbb{R}^2$ or
the local profile of line defects in $\mathbb{R}^3$. 
\end{Remark}

\setcounter{equation}{0}
\section{The second variation of Landau-de Gennes energy}

To prove the stabillity, we need to compute the second variation of
$\mathcal{F}_{LG}$ at critical point $Q=u(r)F_1+v(r)F_2$. For any $V\in H^1(\BR,\mathcal{Q})$, we have
\begin{align}\label{iv}
  \mathcal{I}(V) &\triangleq\frac{d^2}{d\ve^2}\int_{\BR}\Big\{\frac{1}{2}|\nabla(Q+\ve V)|^2-\frac{1}{2}|\nabla Q|^2
-\frac{t}{2}(|Q+\ve V|^2-|Q|^2)\\
\nonumber&\qquad-\frac{1}{3}(\mathrm{tr}((Q+\ve V)^3)-\mathrm{tr}(Q^3))+\frac{1}{4}(|Q+\ve V|^4-|Q|^4)\Big\}\ud x\Big|_{\ve=0} \\
\nonumber&=\int_{\BR}\Big\{|\nabla V|^2-t|V|^2-2tr(QV^2)+c^2|Q|^2|V|^2+2(\mathrm{tr}(QV))^2\Big\}\ud x\\
\nonumber&=\int_{\BR}\Big\{|\nabla V|^2-t|V|^2-2(u\cdot \mathrm{tr}(F_1V^2)+v\cdot \mathrm{tr}(F_2V^2))\\
\nonumber&\qquad+(6v^2+2u^2)|V|^2+2(u\cdot \mathrm{tr}(F_1V)+v\cdot \mathrm{tr}(F_2V))^2\Big\}\ud x.
\end{align}
We define
\begin{equation}\label{def:Ei}
 \begin{split}
  E_0&=\sqrt{\frac{3}{2}}\left(
                                \begin{array}{ccc}
                                  -\frac{1}{3} & 0 & 0 \\
                                  0 & -\frac{1}{3} & 0 \\
                                  0 & 0 & \frac{2}{3} \\
                                \end{array}
                              \right)=\frac{1}{\sqrt{6}}F_2,
\\
   E_1&=\frac{1}{\sqrt{2}}\left(
        \begin{array}{ccc}
          \cos k\varphi & \sin k\varphi & 0 \\
          \sin k\varphi & -\cos k\varphi & 0 \\
          0 & 0 & 0 \\
        \end{array}
      \right)=\frac{1}{\sqrt{2}}F_1,\\
 E_2&=
        \frac{1}{\sqrt{2}}\left(
        \begin{array}{ccc}
          -\sin k\varphi & \cos k\varphi & 0 \\
          \cos k\varphi & \sin k\varphi & 0 \\
          0 & 0 & 0 \\
        \end{array}
      \right),\\
 E_3&=
\frac{1}{\sqrt{2}}\left(
  \begin{array}{ccc}
    0 & 0 & 1 \\
    0 & 0 & 0 \\
    1 & 0 & 0 \\
  \end{array}
\right),\qquad E_4=
\frac{1}{\sqrt{2}}\left(
  \begin{array}{ccc}
    0 & 0 & 0 \\
    0 & 0 & 1 \\
    0 & 1 & 0 \\
  \end{array}
\right).
\end{split}
\end{equation}
A straightforward calculation shows
\begin{equation}
 \mathrm{tr}(E_iE_j)=\delta_j^i\quad \text{for } 0\le i, j\le 4,\nonumber
\end{equation}
which implies that $\{E_i\}_{0\le i\le 4}$ is an orthonormal basis in $\BQ$.
Thus, we can write $V\in H^1(\BR,\mathcal{Q})$ as a linear combination of this basis in the polar coordinate
\begin{equation}\label{lrcb}
V(r,\varphi)=\sum\limits_{i=0}^4 w_i(r,\varphi)E_i(\varphi).
\end{equation}
Using (\ref{lrcb}), a direct calculation yields that
\begin{align*}
  \nonumber |V|^2 =& \sum\limits_{i=0}^4w_i^2,\quad \mathrm{tr}(F_1V) =\sqrt{2}w_1, \quad \mathrm{tr}(F_2V)=\sqrt{6}w_0,\\
 \nonumber V^2=&w_0^2E_0^2+(w_1^2+w_2^2)E_1^2+w_3^2E_3^2+w_4^2E_4^2+(w_1w_3+w_2w_4)(E_1E_3+E_3E_1)\\
\nonumber &+(w_1w_4+w_2w_3)(E_1E_4+E_4E_1)+w_3w_4(E_3E_4+E_4E_3)-\frac{\sqrt{6}}{3}w_0w_1E_1-\frac{\sqrt{6}}{3}w_0
                  w_2E_2\\
\nonumber &+\frac{\sqrt{6}}{6}w_0w_3E_3+\frac{\sqrt{6}}{6}w_0w_4E_4,
\end{align*}
thus we obtain
\begin{align*}
  \mathrm{tr}(F_1V^2)&=\frac{\cos k\va}{2}(w_3^2-w_4^2)-\frac{2\sqrt{3}}{3}w_0w_1+\sin k\va w_3w_4,\\
   \mathrm{tr}(F_2V^2)&=w_0^2-w_1^2-w_2^2+\frac{1}{2}w_3^2+\f{1}{2}w_4^2.
\end{align*}
From the fact $|\nabla V|^2=(\partial_rV)^2+\frac{1}{r^2}(\partial_\varphi V)^2$, we have
\begin{equation}\label{eq:dv}
  |\nabla V|^2=\sum\limits_{i=0}^{4}w_{ir}^2+\frac{1}{r^2}\big(w_{0\varphi}^2
  +(kw_2-w_{1\varphi})^2+(kw_1+w_{2\varphi})^2+w_{3\varphi}^2+w_{4\varphi}^2\big).\nonumber
\end{equation}

In summary, we conclude that
\begin{align}
  \nonumber \mathcal{I}(V) =& \int_0^{+\infty}\int_0^{2\pi}\Big\{ \sum\limits_{i=0}^{4}w_{ir}^2+\frac{1}{r^2}
  \big[w_{0\varphi}^2+(kw_2-w_{1\varphi})^2+(kw_1+w_{2\varphi})^2+w_{3\varphi}^2+w_{4\varphi}^2\big]\\
  \nonumber &+(6v^2+2u^2-t)\big(\sum\limits_{i=0}^4w_i^2\big)-2\big[u(\frac{1}{2}\cos{k\varphi}(w_3^2-w_4^2)-\frac{2}{\sqrt{3}}w_0w_1+\sin{k\varphi}w_3w_4)\\
&+v(w_0^2-w_1^2-w_2^2+\frac{1}{2}w_3^2+\frac{1}{2}w_4^2)\big] +4(uw_1+\sqrt{3}vw_0)^2 \Big\}rdrd\varphi.\nonumber
\end{align}
In order to prove $\mathcal{I}(V)\ge 0$, it suffices to show that
for $|k|=1$,
\begin{align}\label{IA}
  \nonumber I^A(w_0,w_1,w_2) \triangleq&  \int_0^{+\infty}\int_0^{2\pi}\Big\{w_{0r}^2+w_{1r}^2+w_{2r}^2
  +\frac{1}{r^2}[w_{0\varphi}^2+(kw_2-w_{1\varphi})^2+(kw_1+w_{2\varphi})^2]\\
  \nonumber &+(6v^2+2u^2-t)(w_0^2+w_1^2+w_2^2)-\frac{4}{\sqrt3}uw_0w_1\\
&+v(w_0^2-w_1^2-w_2^2)] +4(uw_1+\sqrt{3}vw_0)^2 \Big\}rdrd\varphi\ge 0,
\end{align}
and
\begin{align}\label{IB}
\nonumber I^B(w_3,w_4) \triangleq& \int_0^{+\infty}\int_0^{2\pi}\Big\{ w_{3r}^2+w_{4r}^2+\frac{1}{r^2}(w_{3\varphi}^2+w_{4\varphi}^2)+(6v^2+2u^2-t)(w_3^2+w_{4}^2)\\
 &-u(\cos{k\varphi}(w_3^2-w_4^2)+2\sin{k\varphi}w_3w_4)-v(w_3^2+w_4^2) \Big\}rdrd\varphi\ge 0.
\end{align}

The following lemma shows that $C_c^\infty(\mathbb{R}^2\backslash\{0\})$ is dense in $H^1(\mathbb{R}^2)$. Thus, we may assume  $V\in C_c^\infty(\mathbb{R}^2\backslash\{0\})$, hence $w_i\in C_c^\infty(\mathbb{R}^2\backslash\{0\})$.

\begin{lemma}
$C_c^\infty(\mathbb{R}^2\backslash\{0\})$ is dense in $H^1(\mathbb{R}^2)$.
\end{lemma}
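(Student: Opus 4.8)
The plan is to exploit the fact that a single point has zero $H^1$-capacity in dimension two, which is precisely the mechanism that makes deleting the origin harmless. First I would reduce to a convenient dense class: since $C_c^\infty(\mathbb{R}^2)$ is already known to be dense in $H^1(\mathbb{R}^2)$, it suffices to approximate, in the $H^1$-norm, an arbitrary $f\in C_c^\infty(\mathbb{R}^2)$ by functions in $C_c^\infty(\mathbb{R}^2\setminus\{0\})$. Fixing such an $f$ (which is in particular bounded, with bounded gradient and compact support), the idea is to cut off a shrinking neighborhood of the origin with a \emph{logarithmic} cutoff and then to mollify.

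Concretely, for small $\ve>0$ I would introduce the radial Lipschitz cutoff
\begin{equation}\nonumber
\eta_\ve(x)=
\begin{cases}
0, & |x|\le \ve,\\[1mm]
\dfrac{\log(|x|/\ve)}{\log(1/\ve)}, & \ve\le |x|\le 1,\\[1mm]
1, & |x|\ge 1,
\end{cases}
\end{equation}
and compute its Dirichlet energy. On the annulus $\ve\le|x|\le1$ one has $|\nabla\eta_\ve|=\big(|x|\log(1/\ve)\big)^{-1}$, so that
\begin{equation}\nonumber
\int_{\mathbb{R}^2}|\nabla\eta_\ve|^2\,\ud x
=\frac{1}{(\log(1/\ve))^2}\int_\ve^1\frac{2\pi}{r}\,\ud r
=\frac{2\pi}{\log(1/\ve)}\longrightarrow 0
\end{equation}
as $\ve\to0$. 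This is the crucial estimate, and it is special to two dimensions: a linear rather than logarithmic transition across the annulus would leave $\|\nabla\eta_\ve\|_{L^2}$ bounded below, so the borderline scaling of a point in $\mathbb{R}^2$ is exactly what rescues the argument.

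Next I would verify that $\eta_\ve f\to f$ in $H^1$. Writing $\nabla(\eta_\ve f)-\nabla f=(\eta_\ve-1)\nabla f+f\nabla\eta_\ve$, the gradient term $(\eta_\ve-1)\nabla f$ and the $L^2$-difference $(\eta_\ve-1)f$ both tend to $0$ by dominated convergence, since $\eta_\ve\to1$ pointwise away from the origin while $f$ and $\nabla f$ are bounded with compact support; the remaining term is controlled by $\|f\|_{L^\infty}\|\nabla\eta_\ve\|_{L^2}\to0$ using the estimate above. Thus $\eta_\ve f$ is a compactly supported Lipschitz function vanishing on a neighborhood of the origin that approximates $f$ in $H^1$. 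Mollifying $\eta_\ve f$ at a scale much smaller than $\ve$ then yields functions in $C_c^\infty(\mathbb{R}^2\setminus\{0\})$ that still vanish near the origin and converge to $\eta_\ve f$ in $H^1$; a diagonal choice of the two parameters produces the desired approximating sequence.

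I expect the only genuinely delicate point to be the logarithmic energy decay, together with the bookkeeping of the cutoff scale $\ve$ against the mollification scale; everything else is a routine application of dominated convergence and standard mollifier estimates. The conceptual heart of the statement is simply the vanishing $H^1$-capacity of a point in the plane.
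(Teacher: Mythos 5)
Your proof is correct and follows essentially the same route as the paper: both arguments reduce to approximating a fixed $f\in C_c^\infty(\mathbb{R}^2)$ and then kill a neighborhood of the origin with a logarithmically scaled cutoff, whose Dirichlet energy vanishes (your $2\pi/\log(1/\ve)$ is exactly the paper's $C/\sqrt N$ estimate with $\chi(\ln|x|/N)$). The only cosmetic difference is that the paper's cutoff is smooth from the start, so it dispenses with your final mollification step.
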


\begin{proof}
Since $C_c^\infty(\mathbb{R}^2)$ is dense in $H^1(\mathbb{R}^2)$,
it suffices to show that any $C_c^\infty(\mathbb{R}^2)$ function can be approximated by $C_c^\infty(\mathbb{R}^2\backslash\{0\})$.
For this, we introduce a smooth cut-off function $\chi(r)$ defined by
\beno
\chi(r)=\left\{
\begin{array}{l}
0\qquad r\le -2\\
1\qquad r\ge -1.
\end{array}\right.
\eeno
For any $u\in C_c^\infty(\mathbb{R}^2)$, let $u_N(x)=u(x)\chi(\frac {\ln |x|} N)$ for $N\ge 1$. Obviously,
$u_N\in C_c^\infty(\mathbb{R}^2\backslash\{0\})$. Moreover,
\begin{align*}
\|u-u_N\|_{H^1}\le& \big\|u(1-\chi(\frac {\ln |x|} N))\big\|_{L^2}+\big\|\na u(1-\chi(\frac {\ln |x|} N))\big\|_{L^2}\\
&+\frac 1 N\big\|\f u {|x|}\chi'(\frac {\ln |x|} N))\big\|_{L^2}.
\end{align*}
It is easy to see that the first two terms on the right hand side tend to zero as $N\rightarrow +\infty$.
While, the third term is bounded by
\beno
\f C N\Big(\int_{e^{-2N}\le |x|\le e^{-N}}\frac {|u(x)|^2} {|x|^2}\ud x\Big)^\f12\le \f C {\sqrt N}\|u\|_{L^\infty},
\eeno
which tends to zero as $N\rightarrow +\infty$.
\end{proof}

\setcounter{equation}{0}
\section{Some important integral identities}

In this section, let us derive some important integral identities, which will  play crucial roles in our proof. In the sequel, we assume that  $\eta\in C_c^\infty((0,+\infty),\mathbb{R})$.

Using (\ref{ODE}), we deduce that
\begin{align}\nonumber
\mathcal{A}(\eta):=&\int_{0}^\infty \Big\{ (v\eta)_{r}^2+(6v^2+2u^2-t-v)(v\eta)^2\Big\}rdr\\\nonumber
=&\int_{0}^\infty \Big\{ (v\eta)_{r}^2+(v''+\frac{v'}{r}-\frac{1}{3}u^2)v\eta^2\Big\}rdr\\\nonumber
=&(vv'\eta^2r)|_0^{\infty}+\int_0^{\infty} (v^2\eta_r^2-\frac{1}{3}vu^2\eta^2)rdr\\
=&\int_{0}^\infty \Big\{ (v\eta_{r})^2-\frac13vu^2\eta^2\Big\}rdr,\label{eq:A}
\end{align}
and
\begin{align}\nonumber
\mathcal{B}(\eta):=&\int_{0}^\infty \Big\{ (u\eta)_{r}^2+(6v^2+2u^2-t+2v+\frac{k^2}{r^2})(u\eta)^2\Big\}rdr\\
  \nonumber =&\int_0^{\infty} [(\eta'u+u'\eta)^2+u\eta^2(u''+\frac{u'}{r})]rdr\\
  \nonumber =&(uu'\eta^2r)|_0^{\infty} +\int_0^{\infty} u^2\eta_r^2rdr\\
=&\int_{0}^\infty (u\eta_{r})^2rdr.\label{eq:B}
\end{align}

Taking derivative to (\ref{ODE}) gives
\begin{eqnarray*}
&&u'''+\frac{u''}{r}-\frac{2u'}{r^2}+\frac{2u}{r^3}=u'[-t+2v+6v^2+6u^2]+ 2uv'(1+6v), \\
&&v'''+\frac{v''}{r}-\frac{v'}{r^2}=v'[-t-2v+18v^2+2u^2]+\frac{2uu'}{3}(1+6v).
\end{eqnarray*}
Therefore, we have
\begin{align}
\mathcal{C}(\eta)&:=\int_0^{\infty}\Big\{(v'\eta)_r^2 +(v'\eta)^2(18v^2+2u^2-t-2v)\Big\} rdr\nonumber\\
&=\int_0^{\infty}\Big\{(v''\eta+v'\eta')^2 +
v'\eta^2(v'''+\frac{v''}{r}-\frac{v'}{r^2}-\frac{2uu'}{3}(1+6v))\Big\} rdr\nonumber\\\nonumber
&=\int_0^{\infty}\Big\{(v'\eta')^2 -\frac{(v'\eta)^2}{r^2}
-\frac{2uu'v'(1+6v)}{3}\eta^2\Big\} rdr+(rv''v'\eta^2)\big|_{0}^{\infty}\\
&=\int_0^{\infty}\Big\{(v'\eta')^2 -\frac{(v'\eta)^2}{r^2}
-\frac{2uu'v'(1+6v)}{3}\eta^2\Big\} rdr,\label{eq:C}
\end{align}
and
\begin{align}
\mathcal{D}(\eta)&:=\int_0^{\infty}\Big\{(u'\eta)_r^2 +
(u'\eta)^2(6v^2+6u^2-t+2v+\frac{k^2}{r^2})\Big\} rdr\nonumber\\
&=\int_0^{\infty}\Big\{(u''\eta+u'\eta')^2 +
u'\eta^2\Big(u'''+\frac{u''}{r}-\frac{u'}{r^2}+\frac{2u}{r^3}-{2uv'}(1+6v)\Big)\Big\} rdr\nonumber\\\nonumber
&=\int_0^{\infty}\Big\{(u'\eta')^2 -
\frac{(u'\eta)^2}{r^2}+{\frac{2uu'\eta^2}{r^3}}-2uu'v'(1+6v)\eta^2\Big\} rdr+(ru''u'\eta^2)\big|_{0}^{\infty}\\
&=\int_0^{\infty}\Big\{(u'\eta')^2 -
\frac{(u'\eta)^2}{r^2}+{\frac{2uu'\eta^2}{r^3}}-2uu'v'(1+6v)\eta^2\Big\} rdr.\label{eq:D}
\end{align}
In addition, we have
\begin{align}\nonumber
\mathcal{E}(\eta):=&\int_{0}^\infty \Big\{ (\frac{u\eta}{r})_{r}^2+(6v^2+2u^2-t+2v+\frac{k^2}{r^2})\frac{(u\eta)^2}{r^2}\Big\}rdr\\
  \nonumber =&\int_0^{\infty} [(\frac{u\eta'}{r}+(\frac{u}{r})'\eta)^2+\frac{u\eta^2}{r^2}(u''+\frac{u'}{r})]rdr\\\nonumber
%  \nonumber =&(uu'\eta^2r)|_0^{\infty} +\int_0^{\infty} u^2\eta_r^2rdr\\
=&\int_{0}^\infty\Big( (\frac{u}{r}\eta_{r})^2+\frac{\eta^2}{r^4}(2ruu'-u^2)\Big)rdr+(\frac{u}{r})'u\eta^2\big|_0^\infty\\
=&\int_{0}^\infty\Big( (\frac{u}{r}\eta_{r})^2+\frac{\eta^2}{r^4}(2ruu'-u^2)\Big)rdr.\label{eq:E}
\end{align}

\setcounter{equation}{0}
\section{Proof of Theorem \ref{thm:main}}

This section is devoted to the proof of  Theorem \ref{thm:main}.

\subsection{Non-negativity of $I^B(w_3,w_4)$}

\begin{proposition}\label{prop:A}
For any $w_3, w_4\in C_c^\infty(\mathbb{R}^2\backslash\{0\})$ and  $|k|=1$, we have
\begin{align}\label{IB1}
\nonumber I^B(w_3,w_4):=& \int_0^{+\infty}\int_0^{2\pi}\Big\{ w_{3r}^2+w_{4r}^2
+\frac{1}{r^2}(w_{3\varphi}^2+w_{4\varphi}^2)+(6v^2+2u^2-t)(w_3^2+w_{4}^2)\\
 &-u(\cos{k\varphi}(w_3^2-w_4^2)+2\sin{k\varphi}w_3w_4)-v(w_3^2+w_4^2) \Big\}rdrd\varphi\geq 0.\nonumber
\end{align}
\end{proposition}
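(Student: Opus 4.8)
The plan is to turn the two–dimensional form $I^B$ into a family of decoupled one–dimensional radial forms and then apply the integral identities $\mathcal{A}$ (see \eqref{eq:A}) and $\mathcal{B}$ (see \eqref{eq:B}). First I would complexify, setting $W=w_3+iw_4$, so that $w_{3r}^2+w_{4r}^2=|W_r|^2$, $w_{3\varphi}^2+w_{4\varphi}^2=|W_\varphi|^2$, $w_3^2+w_4^2=|W|^2$, and, crucially,
\[
\cos k\varphi\,(w_3^2-w_4^2)+2\sin k\varphi\,w_3w_4=\mathrm{Re}\bigl(e^{-ik\varphi}W^2\bigr).
\]
Writing $P:=6v^2+2u^2-t-v$, this gives
\[
I^B=\int_0^\infty\!\!\int_0^{2\pi}\Bigl\{|W_r|^2+\tfrac1{r^2}|W_\varphi|^2+P\,|W|^2-u\,\mathrm{Re}(e^{-ik\varphi}W^2)\Bigr\}r\,d\varphi\,dr .
\]
Expanding $W(r,\varphi)=\sum_n W_n(r)e^{in\varphi}$ and using orthogonality, the first three terms diagonalize into $2\pi\sum_n\int_0^\infty\{|W_{n,r}|^2+\tfrac{n^2}{r^2}|W_n|^2+P|W_n|^2\}r\,dr$, while the last produces $2\pi\int_0^\infty u\,\mathrm{Re}\bigl(\sum_n W_nW_{k-n}\bigr)r\,dr$, which couples mode $n$ only to mode $k-n$. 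Since $|k|=1$ is odd, the involution $n\mapsto k-n$ has no fixed point, so the Fourier modes split into disjoint pairs $\{n,k-n\}$ and $I^B$ becomes a sum of per–pair contributions (each mode lying in exactly one pair).

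For a pair $\{n_1,n_2\}$ with $n_1+n_2=k$ I would separate the real and imaginary parts of $W_{n_1},W_{n_2}$, which reduces each per–pair form to two real radial forms of the type
\[
\int_0^\infty\Bigl\{f_r^2+g_r^2+\tfrac{n_1^2}{r^2}f^2+\tfrac{n_2^2}{r^2}g^2+P(f^2+g^2)\mp 2ufg\Bigr\}r\,dr .
\]
The natural ground states are $u>0$ and $-3v>0$, both positive on $(0,\infty)$ by (H1), and they are exactly the profiles encoded in $\mathcal{B}$ and $\mathcal{A}$ respectively (these identities already use the $u$– and $v$–equations of \eqref{ODE}). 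I would substitute $f=v\eta$ and $g=u\,s$, assigning $g=u\,s$ to whichever mode has $n_i^2\ge k^2$ — always possible, since each pair contains a nonzero mode — so that $\mathcal{B}$ controls the $g$–block and $\mathcal{A}$ the $f$–block (all boundary terms vanish because $w_i\in C_c^\infty(\mathbb{R}^2\setminus\{0\})$). After completing the square, the diagonal and coupling parts collapse to
\[
\int_0^\infty\Bigl\{(v\eta_r)^2+u^2 s_r^2+\tfrac{n_1^2}{r^2}v^2\eta^2+\tfrac{n_2^2-k^2}{r^2}u^2s^2-vu^2\Bigl(\tfrac{\eta}{\sqrt3}\pm\sqrt3\,s\Bigr)^2\Bigr\}r\,dr\ge 0,
\]
every term being nonnegative because $-v>0$ and $n_2^2\ge k^2$. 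Summing over all pairs yields $I^B\ge 0$.

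The main obstacle is precisely the term $-u\,\mathrm{Re}(e^{-ik\varphi}W^2)$: it is not a genuine potential but mixes Fourier modes, and nonnegativity hinges on pairing the two ground states with the correct components. A mismatched assignment leaves an indefinite residual quadratic form (with determinant of order $v^2-1<0$), so the sign bookkeeping in the completion of squares is delicate; the oddness of $|k|=1$, which both produces the clean pairing and guarantees that each pair contains a mode with $n_i^2\ge k^2$, is exactly what makes the argument close. For the equality statement I would then trace back through the squares: vanishing forces $\eta,s$ constant with $\tfrac{\eta}{\sqrt3}\pm\sqrt3\,s=0$, and the strictly positive angular penalties $\tfrac{n_1^2}{r^2}$ and $\tfrac{n_2^2-k^2}{r^2}$ annihilate every pair except $\{0,k\}$. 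The surviving pair gives $W_0=-3v\,\alpha$, $W_1=u\,\bar\alpha$ for some $\alpha\in\mathbb{C}$, which, rewritten in $(w_3,w_4)$, is exactly $\mathrm{span}\{V_3,V_4\}$.
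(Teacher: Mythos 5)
Your proposal is correct and follows essentially the same route as the paper's proof: complexify to $W=w_3+iw_4$, expand in Fourier modes, pair mode $n$ with mode $k-n$ (a genuine pairing since $|k|=1$ is odd), and reduce each pair to a one-dimensional radial form that is handled by the identities $\mathcal{A}$ and $\mathcal{B}$ through the substitutions $f=v\eta$, $g=us$ (with the $u$-substitution assigned to a mode with $n_i^2\ge k^2$) and a completion of squares whose sign is fixed by $v<0$. The only difference is cosmetic and in fact slightly cleaner: you split the Fourier coefficients into real and imaginary parts and keep both angular weights, whereas the paper passes to moduli via $|z_mz_{1-m}|\ge \mathrm{Re}(z_mz_{1-m})$ and $|\partial_r z_m|^2\ge(\partial_r|z_m|)^2$, which tacitly assumes a smoothness of $|z_m|$ that your version does not need.
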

\begin{proof}
Let $z=w_3+iw_4$, $i=\sqrt{-1}$. Then we have
\begin{align*}
% \nonumber to remove numbering (before each equation)
&|\pa_rz|^2 = (\pa_rw_3)^2+(\pa_rw_4)^2,\quad |\pa_\varphi z|^2
= (\pa_\varphi w_3)^2+(\pa_\varphi w_4)^2,\quad |z|^2 = w_3^2+w_4^2, \\
 & \cos k\varphi(w_3^2-w_4^2)+2\sin k\varphi w_3w_4
 = Re(\cos k\varphi-i\sin k\varphi)(w_3+iw_4)^2=Re(e^{-ik\varphi}z^2).
\end{align*}
Thus, we can rewrite $I^B(w_3,w_4)$ as
\begin{equation}\label{i34}
  I^B(z)=\int_0^{2\pi}\int_0^{\infty}|\pa_rz|^2+\f{1}{r^2}|\pa_{\varphi}z|^2+(6v^2+2u^2-t-v)|z|^2-uRe(e^{-ik\varphi}z^2)rdrd\varphi.
\end{equation}
Assume that
\begin{equation}
z(r,\varphi)=\sum\limits_{m=-\infty}^{+\infty}z_m(r)e^{im\varphi},\nonumber
\end{equation}
we have
\begin{align}
  \nonumber Re(e^{-ik\varphi}z^2) =& Re\big(e^{-ik\varphi}\sum\limits_{l,m=-\infty}^{+\infty}e^{(m+l)\varphi}z_mz_l\big)
   = Re\big(\sum\limits_{l,m=-\infty}^{+\infty}e^{i(m+l-k)\varphi}z_mz_l\big).
\end{align}
 Substituting it into (\ref{i34}), we get
\begin{align*}
I^B(z)=2\pi\int_0^{+\infty}&\Big\{\sum\limits_{m=-\infty}^{+\infty}\Big[|\pa_rz_m|^2
+\f{m^2}{r^2}|z_m|^2+(6v^2+2u^2-t-v)|z_m|^2\Big]\\
&-u\sum\limits_{m+l=k}Re(z_mz_l)\Big\}rdr.
\end{align*}
When $k=1$, we can write
\begin{align*}
I^B(z)=2\pi\sum\limits_{m=1}^{\infty}M_m,
\end{align*}
where
\begin{align}\nonumber
M_m&~=\int_0^\infty\Big\{|\pa_rz_m|^2+|\pa_rz_{1-m}|^2+\frac1{r^2}(m^2|z_m|^2+(1-m)^2|z_{1-m}|^2)\\
&\qquad\qquad+(6v^2+2u^2-t-v)(|z_m|^2+|z_{1-m}|^2)-2uRe(z_mz_{1-m})\Big\}rdr.\nonumber
\end{align}
Noticing that $m^2\ge 1,(1-m)^2\ge0$ for $m\ge1$, and using the following simple relations
\begin{eqnarray}
 |z_mz_{1-m}| \geq  Re(z_mz_{1-m}),\quad |\pa_rz_m|^2 \geq (\pa_r|z_m|)^2, \quad |\pa_rz_{1-m}|^2 \geq (\pa_r|z_{1-m}|)^2,\nonumber
\end{eqnarray}
we conclude that
\begin{align*}
M_m&\ge \int_0^\infty\Big((\pa_r|z_m|)^2+(\pa_r|z_{1-m}|)^2+\frac1{r^2}|z_m|^2\\
&\qquad\qquad+(6v^2+2u^2-t-v)(|z_m|^2+|z_{1-m}|^2)-2u|z_m||z_{1-m}|\Big) rdr.
\end{align*}
Thus, we only need to show that for any $q_0 ,q_1\in C_c^\infty((0,\infty),\mathbb{R})$,
\begin{equation}
\tilde{I}(q_0,q_1)\triangleq \int_0^{\infty}\Big\{(\pa_r q_0)^2+(\pa_r q_1)^2
+\frac{q_1^2}{r^2}+(6v^2+2u^2-t-v)(q_0^2+q_1^2)-2uq_0q_1\Big\}rdr> 0.
\nonumber
\end{equation}
Let $\eta=q_1/u$ and $\zeta=q_0/v$. Then $\eta,\zeta\in C_c^{\infty}((0,\infty))$.
From (\ref{eq:A}) and (\ref{eq:B}), it is straightforward to obtain
\begin{align}\label{ineq:B}\nonumber
\tilde{I}(q_0,q_1)& = \mathcal{A}(\zeta)+\mathcal{B}(\eta)-\int_0^{\infty}\Big\{3v(u\eta)^2+2vu^2\zeta\eta\Big\}rdr\\
&=\int_0^{\infty}\Big\{(u\eta')^2+(v\zeta')^2-\frac13vu^2(3\eta+\zeta)^2\Big\} rdr,
\end{align}
which is non-negative since $v<0$.

The case of $k=-1$ can be considered similarly.
\end{proof}

\subsection{Non-negativity of $I^A(w_0,w_1,w_2)$}
First of all, we expand $w_i(r,\varphi)$ as
\begin{equation}
w_i(r,\varphi)=\sum\limits_{n=0}^{\infty}\big( \mu_n^{(i)}(r)\cos{n\varphi}+\nu_n^{(i)}\sin{n\varphi}\big).\nonumber
\end{equation}
Since $\om_i\in C_c^\infty(\mathbb{R}^2\backslash\{0\})$, we may assume $\mu_n^{(i)},\nu_n^{(i)}\in C_c^\infty((0,\infty),\mathbb{R})$ for all $n$ and $i$.  Furthermore, $w_0\in C_c^\infty(\mathbb{R}^2)$ requires $\mu_n^{(0)}=\nu_n^{(0)}=0$ for $n\ge 1$.
\medskip

Direct calculation shows that
\begin{align*}
&\int_{0}^{2\pi}(\frac{\partial w_i}{\partial r})^2d\varphi =\pi\big[2(\frac{\partial \mu_0^{(i)}}{\partial r})^2
+\sum\limits_{n=1}^{\infty}((\frac{\partial \mu_n^{(i)}}{\partial r})^2+(\frac{\partial \nu_o^{(i)}}{\partial r})^2)\big],\\
&\int_{0}^{2\pi} (\frac{\partial w_i}{\partial \varphi})^2d\varphi=\pi\sum\limits_{n=1}^{\infty}n^2((\mu_n^{(i)})^2+(\nu_n^{(i)})^2),\\
&\int_{0}^{2\pi}\frac{\partial w_i}{\partial \varphi}w_jd\varphi =\pi\sum\limits_{n=1}^{\infty}n(\mu_n^{(j)}\nu_n^{(i)}-\nu_n^{(j)}\mu_n^{(i)}),\\
&\int_{0}^{2\pi} w_i^2d\varphi =\pi\big[2(\mu_0^{(i)})^2+\sum\limits_{n=1}^{\infty}((\mu_n^{(i)})^2+(\nu_n^{(i)})^2)\big],\\
&\int_{0}^{2\pi} w_iw_jd\varphi=\pi\big[2\mu_0^{(i)}\mu_0^{(j)}+\sum\limits_{n=1}^{\infty}(\mu_n^{(i)}\mu_n^{(j)}+\nu_n^{(i)}\nu_n^{(j)})\big].
\end{align*}
Thus, we can decompose $I^A(w_0,w_1,w_2)$ as
\begin{equation}
I^A(w_0,w_1,w_2) =I^A_{0,01}+I^A_{0,2}+\sum\limits_{n=1}^{\infty} I^A_n,
\end{equation}
where
\begin{align*}
I^A_{0,01}= & \int_0^{\infty}\Big\{(\frac{\partial \mu_0^{(0)}}{\partial r})^2 +(\mu_0^{(0)})^2(18v^2+2u^2-t-2v)+(\frac{\partial \mu_0^{(1)}}{\partial r})^2\\
  \nonumber &+(\mu_0^{(1)})^2(6v^2+6u^2-t+2v+\frac{k^2}{r^2})+\frac{4u}{\sqrt3}(1+6v)\mu_0^{(0)}\mu_0^{(1)} rdr,\\
I^A_{0,2}=&\int_0^\infty (\frac{\partial \mu_0^{(2)}}{\partial r})^2+ (\mu_0^{(2)})^2(6v^2+2u^2-t+2v+\frac{k^2}{r^2})\Big\}rdr,
\end{align*}
and
\begin{align*}
I^A_n=&\int_0^{\infty}\Big\{\sum\limits_{i=0}^2\Big((\frac{\partial \mu_n^{(i)}}{\partial r})^2+(\frac{\partial \nu_n^{(i)}}{\partial r})^2\Big)
+\frac{4kn}{r^2}(\mu_n^{(1)}\nu_n^{(2)}-\mu_n^{(2)}\nu_n^{(1)})+\sum\limits_{i=0}^2\frac{n^2}{r^2}((\mu_n^{(i)})^2+(\nu_n^{(i)})^2)\\
\nonumber&+((\mu_n^{(0)})^2+(\nu_n^{(0)})^2)(18v^2+2u^2-t-2v)+((\mu_n^{(1)})^2+(\nu_n^{(1)})^2)(6v^2+6u^2-t+2v+\frac{k^2}{r^2})\\
\nonumber&+((\mu_n^{(2)})^2+(\nu_n^{(2)})^2)(6v^2+2u^2-t+2v+\frac{k^2}{r^2})
+\frac{4u}{\sqrt3}(1+6v)(\mu_n^{(0)}\mu_n^{(1)}+\nu_n^{(0)}\nu_n^{(1)})\Big\}rdr.
 \end{align*}

The non-negativity of $I^A_{0,01}$ follows from the fact that $(u,v)$ is a local minimizer of reduced energy (\ref{energy:radi}). Indeed, we have
 for any $\eta,\xi\in H^1((0,\infty),rdr)$,
\begin{align}
\mathcal{J}(\eta,\xi) &\triangleq\frac{d^2}{d\ve^2}\Big\{{E}(u+\ve\eta,v+\ve\xi)-{E}(u,v)\Big\}\Big|_{\ve=0}\nonumber\\
\nonumber&=\int_0^{\infty}\Big\{(\partial_r\eta)^2+(\partial_r\xi)^2+\eta^2(18v^2+2u^2-t-2v)\\
  \nonumber &\qquad\qquad+\xi^2(6v^2+6u^2-t+2v+\frac{k^2}{r^2})+\frac{4u}{\sqrt3}(1+6v)\eta\xi\Big\} rdr\geq 0,\nonumber
\end{align}
which implies that
\ben
I^A_{0,01}\ge 0.\label{ineq:A001}
\een

It follows from  (\ref{eq:B}) that
\begin{align}\label{ineq:A-0}
I^A_{0,2}=\mathcal{B}(\mu_0^{(2)}/u)=\int_{0}^\infty \Big(u\partial_r(\mu_0^{(2)}/u)\Big)^2rdr\ge 0.
\end{align}

It remains to  prove $I^A_n\ge0$ for all $n\ge1$, which is a consequence
of the following proposition.

\begin{proposition}\label{prop:B}
For any $ \mu_0, \nu_0, \mu_1, \nu_1, \mu_2, \nu_2\in C_c^\infty((0,\infty))$ and  $|k|=1$, we have
\begin{align*}
&I^A_n( \mu_0, \nu_0, \mu_1, \nu_1, \mu_2, \nu_2)\\
&\triangleq\int_0^{\infty}\Big\{\sum\limits_{i=0}^2\Big((\frac{\partial \mu_i}{\partial r})^2+(\frac{\partial \nu_i}{\partial r})^2\Big)
+\frac{4kn}{r^2}(\mu_1\nu_2-\mu_2\nu_1)+\sum\limits_{i=0}^2\frac{n^2}{r^2}(\mu_i^2+\nu_i^2)\\
\nonumber&\qquad+(\mu_0^2+\nu_0^2)(18v^2+2u^2-t-2v)+(\mu_1^2+\nu_1^2)(6v^2+6u^2-t+2v+\frac{k^2}{r^2})\\
\nonumber&\qquad+(\mu_2^2+\nu_2^2)(6v^2+2u^2-t+2v+\frac{k^2}{r^2})
+\frac{4u}{\sqrt3}(1+6v)(\mu_0\mu_1+\nu_0\nu_1)\Big\}rdr\ge 0.
 \end{align*}
\end{proposition}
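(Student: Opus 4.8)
The plan is to reduce $I^A_n$ to a sum of manifestly non-negative quantities by performing the ground-state substitutions dictated by the null directions $V_1,V_2$, and then to dispose of the two coupling terms separately. Assume first $t\neq 1/3$, where $u'>0$ and $v'$ does not vanish on $(0,\infty)$ (from (H5) together with the strict sign of $1+6v$ coming from (H2),(H3); the boundary value $t=1/3$ and the interior-zero issue are discussed at the end). I would then set
\[
\mu_0=v'\eta_0,\quad \mu_1=u'\eta_1,\quad \mu_2=\tfrac{u}{r}\eta_2,
\]
together with the analogues $\nu_0=v'\tilde\eta_0$, $\nu_1=u'\tilde\eta_1$, $\nu_2=\tfrac{u}{r}\tilde\eta_2$. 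The point of this choice is that the zeroth-order potential multiplying $\mu_0^2$ is exactly the one in $\mathcal{C}$, the one multiplying $\mu_1^2$ the one in $\mathcal{D}$, and the one multiplying $\mu_2^2=(u\eta_2)^2/r^2$ the one in $\mathcal{E}$. Invoking (\ref{eq:C}), (\ref{eq:D}), (\ref{eq:E}) and adding the angular terms $\tfrac{n^2}{r^2}\mu_i^2$, the diagonal part of $I^A_n$ collapses to the non-negative kinetic pieces $(v'\eta_0')^2+(u'\eta_1')^2+(\tfrac{u}{r}\eta_2')^2$ (plus tilde copies), the non-negative remainders $\tfrac{n^2-1}{r^2}(v'\eta_0)^2$ and $\tfrac{n^2-1}{r^2}(u'\eta_1)^2$ (here $n\ge1$ is used), the positive terms $\tfrac{2uu'}{r^3}\eta_1^2$ and $\tfrac{2ruu'+(n^2-1)u^2}{r^4}\eta_2^2$, and the lower-order cross remainders $-\tfrac23 uu'v'(1+6v)\eta_0^2$ and $-2uu'v'(1+6v)\eta_1^2$ produced by $\mathcal{C}$ and $\mathcal{D}$.

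Next I would treat the zeroth-order coupling. Together with those two cross remainders, the term $\tfrac{4u}{\sqrt3}(1+6v)\mu_0\mu_1=\tfrac{4}{\sqrt3}uu'v'(1+6v)\eta_0\eta_1$ assembles into
\[
uu'v'(1+6v)\Big(-\tfrac23\eta_0^2-2\eta_1^2+\tfrac{4}{\sqrt3}\eta_0\eta_1\Big)=-\tfrac23\,uu'v'(1+6v)\,(\eta_0-\sqrt3\,\eta_1)^2,
\]
and likewise for the tilde variables. Since $u>0$, $u'>0$ and $v'(1+6v)\le0$ by (H5), this is non-negative; the vanishing discriminant of the $2\times2$ form is precisely what turns it into a perfect square.

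It then remains to absorb the angular coupling $\tfrac{4kn}{r^2}(\mu_1\nu_2-\mu_2\nu_1)=\tfrac{4knuu'}{r^3}(\eta_1\tilde\eta_2-\eta_2\tilde\eta_1)$. I would pair it against the positive terms isolated above: the pair $(\eta_1,\tilde\eta_2)$ carries $A\eta_1^2+B\tilde\eta_2^2$ with $A=\tfrac{2uu'}{r^3}+\tfrac{(n^2-1)(u')^2}{r^2}$ and $B=\tfrac{2uu'}{r^3}+\tfrac{(n^2-1)u^2}{r^4}$, and pointwise non-negativity of $A\eta_1^2+B\tilde\eta_2^2+\tfrac{4knuu'}{r^3}\eta_1\tilde\eta_2$ reduces, using $|k|=1$, to $AB\ge\tfrac{4n^2(uu')^2}{r^6}$, i.e. to
\[
(n^2-1)\,\frac{uu'}{r^5}\Big(\tfrac{(n^2-5)uu'}{r}+\tfrac{2u^2}{r^2}+2(u')^2\Big)\ge0.
\]
This is immediate for $n=1$ (the factor $n^2-1$ vanishes) and for $n\ge3$ (all terms positive), while for $n=2$ it follows from the elementary bound $\tfrac{2u^2}{r^2}+2(u')^2\ge\tfrac{4uu'}{r}$. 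The pair $(\tilde\eta_1,\eta_2)$ is identical by symmetry, and since the two pairs draw on disjoint shares of the positive budget, the angular coupling is dominated and $I^A_n\ge0$ follows.

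The degenerate value $t=1/3$ must be isolated, and this is exactly where the substitution $\eta_0=\mu_0/v'$ breaks down: there $v\equiv-1/6$, so $1+6v\equiv0$, the zeroth-order coupling vanishes outright, and the $\mu_0,\nu_0$ blocks reduce to $(\partial_r\mu_0)^2+(\tfrac12+2u^2+\tfrac{n^2}{r^2})\mu_0^2\ge0$ with no substitution needed, the remaining $\mu_1,\nu_1,\mu_2,\nu_2$ part with the angular coupling being handled as above. I expect the angular-coupling bound to be the main obstacle: it is genuinely tight, the $n=2$ case being borderline and forcing the AM--GM step, and it is precisely here that $|k|=1$ is indispensable — for $|k|>1$ the inequality fails, consistent with the known instability. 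A secondary technical point is justifying the substitution at interior zeros of $v'$; the strict sign of $1+6v$ from (H2),(H3) is what rules out such degeneracy for $t\neq1/3$.
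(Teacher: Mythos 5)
Your proof is correct, and its engine is the same as the paper's: the substitutions $\mu_0=v'\eta_0$, $\mu_1=u'\eta_1$, $\mu_2=\frac{u}{r}\eta_2$ combined with the integral identities (\ref{eq:C})--(\ref{eq:E}), the vanishing-discriminant square
$-\frac{2}{3}uu'v'(1+6v)(\eta_0-\sqrt{3}\eta_1)^2$
for the zeroth-order coupling, and (H5) for its sign. Where you genuinely differ is the reduction strategy. The paper first disposes of general $n$ by the elementary bound
\begin{equation*}
4n(\mu_1\nu_2-\mu_2\nu_1)+n^2(\mu_1^2+\nu_1^2+\mu_2^2+\nu_2^2)\ge 4(\mu_1\nu_2-\mu_2\nu_1)+(\mu_1^2+\nu_1^2+\mu_2^2+\nu_2^2),
\end{equation*}
and then collapses the six functions to three signed scalars $\alpha_i=\pm\sqrt{\mu_i^2+\nu_i^2}$ via Cauchy--Schwarz together with $(\partial_r\mu)^2+(\partial_r\nu)^2\ge(\partial_r\sqrt{\mu^2+\nu^2})^2$; after that the angular coupling is absorbed into the single perfect square $\frac{2uu'}{r^3}(\eta-\zeta)^2$, with no case analysis at all. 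You instead keep general $n$ and all six components, and dominate the angular coupling by the pointwise discriminant condition $AB\ge 4n^2(uu')^2/r^6$; your identity
\begin{equation*}
AB-\frac{4n^2(uu')^2}{r^6}=(n^2-1)\,\frac{uu'}{r^5}\Big(\frac{(n^2-5)uu'}{r}+\frac{2u^2}{r^2}+2(u')^2\Big)
\end{equation*}
is correct, and the case split ($n=1$ borderline, AM--GM at $n=2$, trivial for $n\ge 3$) closes it; the two pairs $(\eta_1,\tilde\eta_2)$ and $(\tilde\eta_1,\eta_2)$ indeed use disjoint positive terms, and the terms $-2uu'v'(1+6v)\eta_1^2$ spent on the zeroth-order square are not double-counted. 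The trade-off: the paper's two reductions buy a uniform, computation-free square completion with all the tension concentrated at $n=1$, while your route costs the discriminant computation but exhibits the strict positivity margin for $n\ge2$ and keeps the full vector structure. A point in your favor is that you explicitly treat $t=1/3$, where $v'\equiv 0$ makes the substitution $\alpha_0/v'$ meaningless; the paper's proof passes over this degenerate case silently, and your resolution (the coupling vanishes since $1+6v\equiv0$, and the $\mu_0,\nu_0$ block is manifestly non-negative) is exactly right. One small correction: for $t\ne 1/3$, ruling out interior zeros of $v'$ is not a consequence of the strict sign of $1+6v$ alone (which, with (H5), only fixes the sign of $v'$); it requires the strict monotonicity of $v$ established in the paper's appendix, and the same justification is implicitly needed in the paper's own substitution $\xi=\alpha_0/v'$.
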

\begin{proof} From the fact that
\begin{align*}
2(\mu_1\nu_2-\mu_2\nu_1) \ge -(\mu_1^2+\nu_1^2+\mu_2^2+\nu_2^2),
\end{align*}
and $n\ge 1$, we get
\begin{align*}
&4n(\mu_1\nu_2-\mu_2\nu_1)+n^2(\mu_1^2+\nu_1^2+\mu_2^2+\nu_2^2)\ge 4(\mu_1\nu_2-\mu_2\nu_1)+(\mu_1^2+\nu_1^2+\mu_2^2+\nu_2^2).
\end{align*}
So, it suffices to consider the case of $n=1$.

On the other hand, we have
\begin{align*}
|\mu_0\mu_1+\nu_0\nu_1|\le \sqrt{\mu_0^2+\nu_0^2}\sqrt{\mu_1^2+\nu_1^2},\\
|\mu_1\nu_2-\mu_2\nu_1|\le \sqrt{\mu_2^2+\nu_2^2}\sqrt{\mu_1^2+\nu_1^2},
\end{align*}
and $(\partial_r\mu)^2+(\partial_r\nu)^2\ge(\partial_r\sqrt{\mu^2+\nu^2})^2$. Thus, we only need to prove that for $\alpha_i=\pm\sqrt{\mu_{i}^2+\nu_{i}^2}$,
\begin{align*}
\tilde{I}^A_1(\alpha_0, \alpha_1,\alpha_2)=&\int_0^{\infty}\Big((\partial_r\alpha_0)^2+(\partial_r\alpha_1)^2+(\partial_r\alpha_2)^2
-\frac{4}{r^2}\alpha_1\alpha_2+\frac{1}{r^2}(\alpha_0^2+\alpha_1^2+\alpha_2^2)\\
\nonumber&\qquad+\alpha_0^2(18v^2+2u^2-t-2v)+\alpha_1^2(6v^2+6u^2-t+2v+\frac{1}{r^2})\\
\nonumber&\qquad+\alpha_2^2(6v^2+2u^2-t+2v+\frac{1}{r^2})
+\frac{4u}{\sqrt3}(1+6v)\alpha_0\alpha_1\Big)rdr\geq0.
\end{align*}
Let $\xi=\alpha_0/v'$, $\eta=\alpha_1/u'$, $\zeta=r\alpha_2/u$.
Then we infer from (\ref{eq:C})-(\ref{eq:E}) that
\begin{align}\nonumber
&\tilde{I}^A_1(\alpha_0, \alpha_1,\alpha_2)\\\nonumber
&=\mathcal{C}(\xi)+\mathcal{D}(\eta)+\mathcal{E}(\zeta)\\
&\quad+\int_0^{\infty}\Big(-\frac{4}{r^3}uu'\eta\zeta+\frac{1}{r^2}((v'\xi)^2+(u'\eta)^2+(\frac{u\zeta}{r})^2)
+\frac{4uu'v'}{\sqrt3}(1+6v)\xi\eta\Big)rdr\nonumber\\\label{ineq:A-1}
&=\int_0^{\infty}\Big((v'\xi')^2+(u'\eta')^2+(\frac{u\zeta'}{r})^2+\frac{2uu'}{r^3}(\eta-\zeta)^2
-\frac{2uu'v'}{\sqrt3}(1+6v)\big(\eta+\frac{\xi}{\sqrt{3}}\big)^2\Big)rdr\geq 0.\end{align}
This completes our proof.
\end{proof}

\subsection{Proof of Theorem \ref{thm:main}}

In order to prove $\mathcal{I}(V)\ge 0$, it suffices to show that
\beno
I^A(w_0,w_1,w_2)\ge 0\quad\text{and}\quad I^B(w_3,w_4)\ge 0,
\eeno
which follow from Proposition \ref{prop:A}, \eqref{ineq:A001}, (\ref{ineq:A-0}) and Proposition \ref{prop:B}.

The second part of Theorem \ref{thm:main} can be directly deduced from (\ref{ineq:B}), (\ref{ineq:A-0}) and (\ref{ineq:A-1}).

\section{Appendix: proof of (H5)}

In this section, we will verify the property (H5) for the solution $(u,v)$ constructed in \cite{INSZ2}.  Let
\begin{align*}
p(r)=uu',\quad q(r)=-v'(1+6v).
\end{align*}
First of all, we will show that $p(r)$ and $q(r)$ are nonnegative.
Recall that the fact that $(u,v)$ is a local minimizer of reduced energy (\ref{energy:radi})  implies that
\begin{align*}
I^A_{0,01}= & \int_0^{\infty}\Big\{(\frac{\partial \mu_0^{(0)}}{\partial r})^2 +(\mu_0^{(0)})^2(18v^2+2u^2-t-2v)+(\frac{\partial \mu_0^{(1)}}{\partial r})^2\\
  \nonumber &+(\mu_0^{(1)})^2(6v^2+6u^2-t+2v+\frac{k^2}{r^2})+\frac{4u}{\sqrt3}(1+6v)\mu_0^{(0)}\mu_0^{(1)}\Big\} rdr\ge 0.\nonumber
\end{align*}
If $\{p(r)<0 \} \cup\{q(r)<0\}\neq \emptyset$, we let
\begin{align*}
\chi =\mathbf{1}_{\{p(r)<0\}},\qquad \eta =-\sqrt{3}\mathbf{1}_{\{q(r)<0\}}.
\end{align*}
Take $\mu_0^{(0)}=v'\eta$ and $\mu_0^{(1)}=u'\chi$. Formally, it follows from (\ref{eq:C}) and (\ref{eq:D})  that
\begin{align}
0\le I^A_{0,01}&=\int_0^{\infty}\Big\{(v'\eta')^2 -\frac{(v'\eta)^2}{r^2}
-\frac{2uu'v'(1+6v)}{3}\eta^2\Big\} rdr+(rv''v'\eta^2)\big|_{0}^{\infty}\nonumber\\\nonumber
&\quad+\int_0^{\infty}\Big\{(u'\chi')^2 -
\frac{(u'\chi)^2}{r^2}+{\frac{2uu'\chi^2}{r^3}}-2uu'v'(1+6v)\chi^2\Big\} rdr+(ru''u'\chi^2)\big|_{0}^{\infty}\\\nonumber
&\quad+\int_0^{\infty}\Big\{\frac{4}{\sqrt3}uu'v'(1+6v)\chi\eta\Big\} rdr\\\nonumber
&=\int_0^{\infty}\Big\{(v'\eta')^2 -\frac{(v'\eta)^2}{r^2}
-\frac{2uu'v'(1+6v)}{3}(\frac{\eta}{\sqrt3}+\chi)^2\Big\} rdr+(rv''v'\eta^2)\big|_{0}^{\infty}\\\nonumber
&\quad+\int_0^{\infty}\Big\{(u'\chi')^2 -
\frac{(u'\chi)^2}{r^2}+{\frac{2uu'\chi^2}{r^3}}\Big\} rdr+(ru''u'\chi^2)\big|_{0}^{\infty}\\\nonumber
&=\int_0^{\infty}\Big\{(v'\eta')^2 -\frac{(v'\eta)^2}{r^2}
+\frac{2pq}{3}(\frac{\eta}{\sqrt3}+\chi)^2+(u'\chi')^2 -
\frac{(u'\chi)^2}{r^2}+{\frac{2p\chi^2}{r^3}}\Big\} rdr.
\end{align}
Notice that
\begin{align*}
&\int_0^{\infty}(v'\eta')^2 rdr =\int_0^{\infty}(u'\chi')^2rdr=0,\\
&-\int_0^{\infty}\Big\{ \frac{(v'\eta)^2}{r^2}+ \frac{(u'\chi)^2}{r^2}\Big\} rdr<0,\\
&\int_0^{\infty}pq(\frac{\eta}{\sqrt3}+\chi)^2 rdr=\int_{\{p<0,q<0\}}pq(\frac{\eta}{\sqrt3}+\chi)^2 rdr
+\int_{\{p<0,q>0\}}pq(\frac{\eta}{\sqrt3}+\chi)^2 rdr\\
&\qquad+\int_{\{p>0,q<0\}}pq(\frac{\eta}{\sqrt3}+\chi)^2 rdr \le 0,\\
&\int_0^{\infty}{\frac{2p\chi^2}{r^3}} rdr=\int_{\{p<0\}}{\frac{2p\chi^2}{r^3}} rdr\le 0,
\end{align*}
which contradict with  $I^A_{0,01} \ge 0.$ Thus, we deduce that
\beno
u'(r)\ge 0,\quad v'(r)(1+6v)\le 0\quad \text{for } r>0.
\eeno
Thanks to the fact that $p(r)\ge 0, q(r)\ge 0$ for $r$ small, and $u'=0$ on $p(r)=0, v'=0$ on $q(r)=0$, the above formal derivation can be justified by a standard smoothing procedure and cutoff argument.

Next, we prove that $u'(r)>0$ for $r>0$.  Otherwise, $u'(r_0)=0$ for some $r_0>0$. Then we have $u''(r_0)=0$ due to $u'(r)\ge 0$. On the other hand, we have
\begin{eqnarray*}
  u'''+\frac{u''}{r}-\frac{2u'}{r^2}+\frac{2u}{r^3}=u'\big(-t+2v+6v^2+6u^2\big)+ 2uv'(1+6v).
\end{eqnarray*}
Taking $r=r_0$, we get
\begin{eqnarray*}
  u'''(r_0)=u(-\frac{2}{r^3}+2v'(1+6v))<0,
\end{eqnarray*}
which contradicts with $u'(r)\ge 0$. Thus, $u'>0$ for all $r>0$.

By a similar argument and the equation
\begin{eqnarray*}
  v'''+\frac{v''}{r}-\frac{v'}{r^2}=v'\big(-t-2v+18v^2+2u^2\big)+\frac{2uu'}{3}(1+6v),
\end{eqnarray*}
we can deduce that  $v$ is also strictly monotonic.

\section*{Acknowledgments}
W. Wang is partly supported by NSF of China under Grant 11501502.
P. Zhang is partly supported by NSF of China under Grant 11421101 and 11421110001.
Z. Zhang is partly supported by NSF of China under Grant 11371039 and 11425103.

\end{document}